\newtheorem{theorem}{Theorem}
\theoremstyle{plain}
\newtheorem{corollary}{Corollary}
\newtheorem{remark}{Remark}
\numberwithin{equation}{section}
\begin{document}
\title[Weighted Ostrowski Type Inequality ]{A Weighted Ostrowski Type
inequality for L$_{1}\left[ a,b\right] $ and applications}
\author{A. Qayyum$^{1,2}$}
\address{$^{1}$Department of Mathematics, Universiti teknologi Patronas,
Malaysia. $^{2}$Department of Mathematics, University of Hail, Hail 2440,
Saudi Arabia}
\email{atherqayyum@gmail.com}
\author{S. S. Dragomir$^{1,2}$}
\address{$^{1}$Mathematics, College of Engineering \& Science, Victoria
University, PO Box 14428, Melbourne City, MC 8001, Australia.\\
$^{2}$School of Computational and Applied Mathematics, University of the
Witwatersrand, Private Bag 3, Johannesburg 2050, South Africa.}
\email{sever.dragomir@vu.edu.au}
\author{M. Shoaib}
\address{ Department of Mathematics, University of Hail, Hail 2440, Saudi
Arabia}
\email{safridi@gmail.com}
\date{Today}
\subjclass[2000]{Primary 65D30; Secondary 65D32}
\keywords{Ostrowski inequality, Weight function, Numerical Integration}
\thanks{This paper is in final form and no version of it will be submitted
for publication elsewhere.}

\begin{abstract}
The aim of this paper is to obtain some generalized weighted Ostrowski
inequalities for differentiable mappings. Some well known inequalities can
be derived as special cases of the inequalities obtained here. In addition,
perturbed mid-point inequality and perturbed trapezoid inequality are also
obtained. The inequalities obtained here have direct applications in
Numerical Integration, Probability Theory, Information Theory and Integral
Operator Theory. Some of these applications are discussed.
\end{abstract}

\maketitle

\section{Introduction}

Inequalities appear in most of the domains of Mathematics and has
applications in numerical integration, probability theory, information
theory and integral operator theory. Inequalities as a field came into
promince with the publication of a book by Hardy, Littlewood and Polya \cite%
{6} in 1934. In 1938, Ostrowski \cite{8} discovered a useful inequality,
which is known after his name as Ostrowski inequality. In many practical
investigation, it is necessary to bound one quantity by another. This
classical Ostrowski inequality is very useful for this purpose\textbf{.}
Beckenbach and Bellman \cite{2} \ and Mitrinovi\'{c} \cite{12} highlighted
the importance of inequalities in their respective publications.\bigskip

More recently new inequalities of Ostrowski type were presented Dragomir and
Wang \textbf{\cite{5}} in 1997 and Dragomir and Rassias \cite{4} in 2002.
The weighted version of Ostrowski inequality was first presented in 1983 by
Pecari\'{c} and Savi\'{c} \cite{9}. In 2003, Roumeliotis \cite{11} did some
improvement in the weighted version of Ostrowski-Gr\"{u}ss type
inequalities. In \cite{10} and \cite{7}, Qayyum and Hussain discussed the
weighted version of Ostrowski-Gr\"{u}ss type inequalities. The tools that
are used in this paper are weighted Peano kernel approach which is the
classical and extensively used approach in developing Ostrowski integral
inequalities. The results presented in this paper are very general in
nature. \ The inequalities proved by Dragomir et al \textbf{\cite{5}},
Barnett et al \textbf{\cite{1}} and Cerone et al \textbf{\cite{3}} are
special cases of the inequalities developed here.

Ostrowski \cite{8} proved the classical integral inequality which is stated
here without proof.

\begin{theorem}
Let \ \ $f\ $: $\left[ a,b\right] \rightarrow
%TCIMACRO{\U{211d} }%
%BeginExpansion
\mathbb{R}
%EndExpansion
$ \ be\ continuous on $\left[ a,b\right] $ and differentiable on $\left(
a,b\right) ,$ whose derivative $f^{\prime }:\left( a,b\right) \rightarrow
%TCIMACRO{\U{211d} }%
%BeginExpansion
\mathbb{R}
%EndExpansion
$ is bounded on $\ \left( a,b\right) ,$ i.e. $\left\Vert f^{\prime
}\right\Vert _{\infty }=\sup_{t\in \left[ a,b\right] }\left\vert f^{\prime
}\left( t\right) \right\vert <\infty $ then%
\begin{equation}
\left\vert f(x)-\frac{1}{b-a}\int_{a}^{b}f(t)dt\right\vert \leq \left[ \frac{%
1}{4}+\frac{\left( x-\frac{a+b}{2}\right) ^{2}}{\left( b-a\right) ^{2}}%
\right] \left( b-a\right) \left\Vert f^{\prime }\right\Vert _{\infty }
\tag{1.1}
\end{equation}%
for all $x\in \left[ a,b\right] $. The\ constant$\ \frac{1}{4}\;$is sharp in
the sense that it can not be replaced by a smaller one.
\end{theorem}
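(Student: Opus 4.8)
The plan is to prove the identity and inequality by the classical Peano kernel method. First I would introduce the kernel
\[
P(x,t) = \begin{cases} t-a, & a \le t \le x, \\ t-b, & x < t \le b, \end{cases}
\]
and establish, by integration by parts on each of the subintervals $[a,x]$ and $[x,b]$, the Montgomery-type identity
\[
(b-a)f(x) - \int_a^b f(t)\,dt = \int_a^b P(x,t)\,f'(t)\,dt.
\]
The boundary terms $(t-a)f(t)$ and $(t-b)f(t)$ evaluated at the endpoints collapse to $(x-a)f(x)$ and $(b-x)f(x)$, whose sum is exactly $(b-a)f(x)$, while the two leftover integrals combine to $-\int_a^b f(t)\,dt$. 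This identity is the heart of the argument and requires only the continuity of $f$ and the integrability of $f'$.

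Next I would divide by $(b-a)$, take absolute values, and pull $\|f'\|_\infty$ out of the integral, obtaining
\[
\left| f(x) - \frac{1}{b-a}\int_a^b f(t)\,dt \right| \le \frac{\|f'\|_\infty}{b-a}\int_a^b |P(x,t)|\,dt.
\]
The kernel integral splits cleanly as $\int_a^x (t-a)\,dt + \int_x^b (b-t)\,dt = \tfrac{1}{2}\big[(x-a)^2 + (b-x)^2\big]$. The only genuine computation is the algebraic simplification: writing $u = x - \tfrac{a+b}{2}$ one finds $(x-a)^2 + (b-x)^2 = \tfrac{1}{2}(b-a)^2 + 2u^2$, so that dividing by $2(b-a)$ produces exactly $\big[\tfrac14 + \tfrac{u^2}{(b-a)^2}\big](b-a)$, which is the claimed bound.

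Finally, for the sharpness of the constant $\tfrac14$, I would specialize to the midpoint $x = \tfrac{a+b}{2}$, where the bound reduces to $\tfrac14 (b-a)\|f'\|_\infty$, and test the inequality on $f(t) = \big|t - \tfrac{a+b}{2}\big|$. For this choice $f(x) = 0$, $\int_a^b f(t)\,dt = \tfrac14 (b-a)^2$, and $\|f'\|_\infty = 1$, so both sides equal $\tfrac14 (b-a)$ and equality is attained; hence no constant smaller than $\tfrac14$ can work. The main obstacle here is a technical one: this extremal $f$ fails to be differentiable at the single point $x$, so strictly it lies outside the stated hypotheses. I expect to handle this either by approximating $f$ by smooth functions whose derivatives converge to the sign function, or by observing that the integration-by-parts step remains valid under the weaker assumption that $f$ is absolutely continuous, which the test function satisfies.
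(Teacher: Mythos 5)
The paper states this classical result of Ostrowski without proof (it is quoted from \cite{8}), so there is nothing internal to compare against; but your argument is the standard Peano kernel proof and it is correct: the kernel $P(x,t)$, the Montgomery identity via integration by parts, the computation $\int_a^b|P(x,t)|\,dt=\tfrac12[(x-a)^2+(b-x)^2]$, and the algebraic rewriting in terms of $u=x-\tfrac{a+b}{2}$ are all right, and this is precisely the unweighted specialization of the kernel method the paper itself uses for its main Theorem~4. Your sharpness argument at the midpoint with $f(t)=|t-\tfrac{a+b}{2}|$ is also sound, and you correctly flag the only technical wrinkle (non-differentiability at one point), which is adequately dispatched either by smoothing or by noting the identity holds for absolutely continuous $f$.
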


Dragomir and Wang \cite{5} proved $\left( 1.1\right) $ for $f^{\text{ }%
\prime }\in L_{1}\left[ a,b\right] $ $,$ as follows:

\begin{theorem}
Let \ \ $f\ $:$I\subseteq $ $%
%TCIMACRO{\U{211d} }%
%BeginExpansion
\mathbb{R}
%EndExpansion
\rightarrow
%TCIMACRO{\U{211d} }%
%BeginExpansion
\mathbb{R}
%EndExpansion
$ \ be\ a differentiable mapping in $I^{\circ }$ and $a,b\in I^{\circ }$with
$a<b.$ If $f^{\text{ }\prime }\in L_{1}\left[ a,b\right] $, then the
inequality holds%
\begin{equation}
\left\vert \text{ }f(x)-\frac{1}{b-a}\int_{a}^{b}f(t)dt\right\vert \leq %
\left[ \frac{1}{2}+\frac{\left\vert x-\frac{a+b}{2}\right\vert }{b-a}\right]
\left\Vert f^{\prime }\right\Vert _{1}  \tag{1.2}
\end{equation}%
for all $x\in \left[ a,b\right] $.
\end{theorem}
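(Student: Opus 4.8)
The plan is to use the weighted Peano kernel method, which is the standard device for establishing Ostrowski-type inequalities. First I would introduce the kernel
\[
p(x,t)=\begin{cases} t-a, & a\leq t\leq x, \\ t-b, & x<t\leq b, \end{cases}
\]
which is engineered precisely so that an integration by parts collapses to the expression $f(x)-\frac{1}{b-a}\int_{a}^{b}f(t)\,dt$ appearing on the left-hand side of $(1.2)$.

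The next step is to establish the associated integral identity. I would integrate $\int_{a}^{b}p(x,t)f^{\prime }(t)\,dt$ by parts over each of the two subintervals $[a,x]$ and $[x,b]$ separately. On $[a,x]$ the boundary term at $t=a$ vanishes because $p(x,a)=0$, and on $[x,b]$ the boundary term at $t=b$ vanishes because $p(x,b)=0$; the two interior contributions at $t=x$ combine as $(x-a)f(x)-(x-b)f(x)=(b-a)f(x)$. Collecting the remaining integrals yields
\[
\int_{a}^{b}p(x,t)f^{\prime }(t)\,dt=(b-a)f(x)-\int_{a}^{b}f(t)\,dt,
\]
and dividing through by $b-a$ gives the representation I want.

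With the identity in hand the estimation is routine: taking absolute values and pulling the supremum of the kernel out of the integral gives
\[
\left\vert f(x)-\frac{1}{b-a}\int_{a}^{b}f(t)\,dt\right\vert \leq \frac{1}{b-a}\left( \sup_{t\in \lbrack a,b]}\left\vert p(x,t)\right\vert \right) \left\Vert f^{\prime }\right\Vert _{1},
\]
using the elementary bound $\int_{a}^{b}|p(x,t)||f^{\prime }(t)|\,dt\leq \|p(x,\cdot )\|_{\infty }\|f^{\prime }\|_{1}$. It then remains only to evaluate $\sup_{t}|p(x,t)|$. On $[a,x]$ the kernel increases from $0$ to $x-a$, while on $[x,b]$ its absolute value decreases from $b-x$ to $0$, so the supremum equals $\max \{x-a,\,b-x\}$. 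Applying $\max \{u,v\}=\frac{u+v}{2}+\frac{|u-v|}{2}$ with $u=x-a$ and $v=b-x$ yields $\sup_{t}|p(x,t)|=\frac{b-a}{2}+\left\vert x-\frac{a+b}{2}\right\vert$, and substituting this back reproduces exactly the bracketed factor in $(1.2)$.

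The step I expect to require the most care is the integration-by-parts identity itself rather than the final estimate. Since $f^{\prime }$ is assumed only to lie in $L_{1}[a,b]$, the manipulation rests on the absolute continuity of $f$ so that the fundamental theorem of calculus is available, and one must respect the jump discontinuity of $p(x,\cdot )$ at $t=x$ by splitting the integral there, as above. Once the identity is secured the bound is immediate, and no sharpness discussion is needed since the statement makes no optimality claim about the constant.
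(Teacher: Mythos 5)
Your argument is correct: the integration by parts with the kernel $p(x,t)$ yields the Montgomery-type identity, and the bound $\sup_{t}|p(x,t)|=\max\{x-a,\,b-x\}=\frac{b-a}{2}+\left\vert x-\frac{a+b}{2}\right\vert$ gives exactly the bracket in $(1.2)$. The paper itself states this theorem without proof (it is quoted from Dragomir and Wang \cite{5}), but your Peano kernel approach is precisely the standard one and is the unweighted specialization of the kernel $P(x,t)$ the paper uses to prove its own main result, so there is nothing to object to.
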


They also pointed out some applications of $(1.2)$ in Numerical Integration
as well as for special means.

Barnett et,al, \cite{1} proved out an inequality of Ostrowski type for twice
differentiable mappings which is in terms of the $\left\Vert .\right\Vert
_{1}$ norm of the second derivative $f^{\prime \prime }$ and apply it in
numerical integration and for some special means.

The following inequality of Ostrowski's type for mappings which are twice
differentiable, holds \cite{3}.

\begin{theorem}
Let \ \ $f\ $: $\left[ a,b\right] \rightarrow
%TCIMACRO{\U{211d} }%
%BeginExpansion
\mathbb{R}
%EndExpansion
$ \ be\ continuous on $\left[ a,b\right] $ and twice differentiable in $%
\left( a,b\right) $ and $f^{\text{ }\prime \prime }\in L_{1}\left(
a,b\right) .$ Then the inequality obtained%
\begin{eqnarray}
&&\left\vert \text{ }f(x)-\frac{1}{b-a}\int_{a}^{b}f(t)dt-\left( x-\frac{a+b%
}{2}\right) f^{\text{ }\prime }\left( x\right) \right\vert  \label{1.3} \\
&\leq &\frac{1}{2\left( b-a\right) }\left( \left\vert x-\frac{a+b}{2}%
\right\vert +\frac{1}{2}\left( b-a\right) \right) ^{2}\left\Vert f^{\prime
\prime }\right\Vert _{1}  \notag
\end{eqnarray}%
for all $x\in \left[ a,b\right] $.
\end{theorem}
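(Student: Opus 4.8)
The plan is to derive the identity underlying the stated inequality by means of a second--order Peano kernel and then to estimate the resulting integral through the $L_{1}$--$L_{\infty}$ pairing, in direct analogy with the way $(1.2)$ is obtained from a first--order kernel. First I would introduce the kernel
\[
K(x,t)=
\begin{cases}
\frac{1}{2}\left( t-a\right) ^{2}, & t\in \left[ a,x\right] ,\\
\frac{1}{2}\left( t-b\right) ^{2}, & t\in \left( x,b\right] ,
\end{cases}
\]
and establish the representation
\[
\int_{a}^{b}K(x,t)f^{\prime \prime }(t)\,dt=\int_{a}^{b}f(t)\,dt-\left( b-a\right) f(x)+\left( b-a\right) \left( x-\frac{a+b}{2}\right) f^{\prime }(x).
\]

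To verify this identity I would integrate by parts twice on each of the two subintervals $\left[ a,x\right] $ and $\left[ x,b\right] $ (legitimate since $f^{\prime }$ is absolutely continuous under the stated hypotheses), transferring the two derivatives from $f^{\prime \prime }$ first onto $f^{\prime }$ and then onto $f$. On $\left[ a,x\right] $ both $K(x,\cdot )$ and its $t$--derivative carry the factor $\left( t-a\right) $ and hence vanish at $t=a$, while on $\left[ x,b\right] $ the analogous terms carry $\left( t-b\right) $ and vanish at $t=b$. Thus the only surviving boundary contributions come from the interior point $t=x$, where they assemble into $\left( b-a\right) \left( x-\frac{a+b}{2}\right) f^{\prime }(x)$ and $-\left( b-a\right) f(x)$, while the two leftover integrals $\int_{a}^{x}f$ and $\int_{x}^{b}f$ combine into $\int_{a}^{b}f$. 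This is routine bookkeeping, and the one genuine design choice is that $K$ is built so that its boundary data at $x$ reproduces exactly the correction term $\left( x-\frac{a+b}{2}\right) f^{\prime }(x)$.

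Rearranging the identity and dividing by $b-a$ gives
\[
f(x)-\frac{1}{b-a}\int_{a}^{b}f(t)\,dt-\left( x-\frac{a+b}{2}\right) f^{\prime }(x)=-\frac{1}{b-a}\int_{a}^{b}K(x,t)f^{\prime \prime }(t)\,dt,
\]
so that, taking absolute values and using $\left\vert \int_{a}^{b}K f^{\prime \prime }\right\vert \leq \left( \sup_{t\in \left[ a,b\right] }\left\vert K(x,t)\right\vert \right) \left\Vert f^{\prime \prime }\right\Vert _{1}$, the whole problem reduces to computing the supremum of $\left\vert K(x,\cdot )\right\vert $. Since $K(x,\cdot )$ increases on $\left[ a,x\right] $ and decreases on $\left[ x,b\right] $, this supremum is attained at $t=x$ and equals $\frac{1}{2}\max \left\{ \left( x-a\right) ^{2},\left( b-x\right) ^{2}\right\} =\frac{1}{2}\left( \max \left\{ x-a,\,b-x\right\} \right) ^{2}$. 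The step I expect to need the most care is the final rewriting: applying $\max \{u,v\}=\tfrac{u+v}{2}+\tfrac{\left\vert u-v\right\vert }{2}$ with $u=x-a$ and $v=b-x$, so that $u+v=b-a$ and $\left\vert u-v\right\vert =2\left\vert x-\frac{a+b}{2}\right\vert $, yields $\max \left\{ x-a,\,b-x\right\} =\left\vert x-\frac{a+b}{2}\right\vert +\frac{1}{2}\left( b-a\right) $. Substituting this into the estimate produces precisely the right--hand side of $(1.3)$, completing the argument.
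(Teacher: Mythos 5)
Your proof is correct. Note that the paper itself states this theorem without proof, quoting it from the Cerone--Dragomir--Roumeliotis reference \cite{3}, so there is no in-paper argument to compare against line by line; but your route is the standard one for this result and is, in substance, the unweighted specialization of the machinery the paper uses for its own Theorem 4. There the authors build a first-order weighted kernel $P(x,t)$ and iterate it, so that the second derivative is paired with $\int_a^b P(x,t)P(t,s)\,dt$; when $w\equiv 1$ this iterated kernel collapses to exactly your quadratic kernel $K(x,t)=\tfrac12 (t-a)^2$ on $[a,x]$ and $\tfrac12 (t-b)^2$ on $(x,b]$, and your direct double integration by parts produces the same identity in one pass. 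Your bookkeeping checks out: the coefficient of $f'(x)$ is $\tfrac12\left[(x-a)^2-(x-b)^2\right]=(b-a)\left(x-\tfrac{a+b}{2}\right)$, the coefficient of $f(x)$ is $-(b-a)$, the supremum of $|K(x,\cdot)|$ is attained at $t=x$ and equals $\tfrac12\bigl(\max\{x-a,\,b-x\}\bigr)^2$, and the $\max$-to-absolute-value rewriting gives the stated constant. The only point worth flagging is the regularity needed to integrate by parts twice: ``twice differentiable with $f''\in L_1$'' is the paper's (and the literature's) shorthand for $f'$ absolutely continuous, and you correctly invoke that; it is the standard reading of the hypothesis rather than a gap in your argument.
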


J. Roumeliotis \cite{4}, presented product inequalities and weighted
quadrature. The weighted inequlity was also obtained in Lebesgue spaces
involving first derivative of the function, which is given by%
\begin{eqnarray}
&&\left\vert \text{ }\frac{1}{b-a}\int_{a}^{b}w\left( t\right)
f(t)dt-m\left( a,b\right) f\left( x\right) \right\vert  \notag \\
&\leq &\frac{1}{2}\left[ m\left( a,b\right) +\left\vert m\left( a,x\right)
-m\left( x,b\right) \right\vert \right] \left\Vert f^{\prime \prime
}\right\Vert _{1}  \label{1.4}
\end{eqnarray}%
Motivated and inspired by the work of the above mentioned renowned
mathematicians, we will establish a new inequality by using weight function
, which will be better and generalized than those developed in $\left[ 1-4%
\right] .$ Some other interesting inequalities are also presented as special
cases. In the last, we presented applications for some special means and in
numerical integration.

\section{Main Results}

In order to prove our main result we first give the following essential
definition.

We assume that the weight function (or density) $w:(a,b)\longrightarrow
\lbrack 0,\infty )$ to be non-negative and integrable over its entire domain
and%
\begin{equation*}
\int_{a}^{b}w(t)dt<\infty .
\end{equation*}%
The domain of $\ w$\ \ \ may be finite or infinite and may vanish at the
boundary point. We denote the moment%
\begin{equation*}
m(a,b)=\int_{a}^{b}w(t)dt.
\end{equation*}%
We now give our main result.

\begin{theorem}
\textit{Let \ }$\mathit{\ }f:[a,b]\rightarrow
%TCIMACRO{\U{211d} }%
%BeginExpansion
\mathbb{R}
%EndExpansion
$ \textit{be continuous on }$[a,b]$\textit{\ and differentiable on }$(a,b)$
and satisfy the condition $\theta \leq f^{\text{ }\prime }\leq \Phi $ $\ ,\
x\in (a,b).$ Then we have the inequality%
\begin{eqnarray}
&&\left\vert f(x)-\frac{1}{m(a,b)}w(x)\left( b-a\right) \left( x-\frac{a+b}{2%
}\right) f^{\text{ }\prime }(x)-\frac{1}{m(a,b)}\int_{a}^{b}f(t)w(t)dt\right%
\vert  \notag \\
&\leq &\frac{1}{2m^{2}(a,b)}w(x)\left( \frac{1}{2}\left( b-a\right)
^{2}+2\left( x-\frac{a+b}{2}\right) ^{2}\right)  \notag \\
&&  \notag \\
&&\times \left( \frac{1}{2}\left( b-a\right) +\left\vert x-\frac{a+b}{2}%
\right\vert \right) \left\Vert f^{\text{ }\prime \prime }\right\Vert _{w,1}
\label{2.1}
\end{eqnarray}%
for all $x\in \left[ a,b\right] $.
\end{theorem}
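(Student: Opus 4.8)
The plan is to use a weighted Peano-kernel approach. Since the statement involves the second derivative $f^{\prime\prime}$ (through $\left\Vert f^{\prime\prime}\right\Vert_{w,1}$) and the first derivative $f^{\prime}(x)$ appears on the left, the natural device is a kernel $p(x,t)$ that, when integrated twice by parts against $f^{\prime\prime}$, reproduces the three terms $f(x)$, the moment-weighted integral $\frac{1}{m(a,b)}\int_a^b f(t)w(t)\,dt$, and the correction term $\frac{w(x)(b-a)}{m(a,b)}\bigl(x-\frac{a+b}{2}\bigr)f^{\prime}(x)$. First I would posit a piecewise-quadratic kernel in $t$ of the form
\begin{equation*}
P(x,t)=\begin{cases}\dfrac{1}{2}(t-a)^2-\alpha(t-a), & a\le t\le x,\\[2mm] \dfrac{1}{2}(t-b)^2-\beta(t-b), & x< t\le b,\end{cases}
\end{equation*}
where the constants $\alpha,\beta$ (depending on $x,a,b$) are chosen so that the interface conditions at $t=x$ and the final identity match the desired left-hand side. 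The key identity I would establish is
\begin{equation*}
\frac{1}{m(a,b)}\int_a^b w(t)\,P(x,t)\,f^{\prime\prime}(t)\,dt = f(x)-\frac{w(x)(b-a)}{m(a,b)}\Bigl(x-\tfrac{a+b}{2}\Bigr)f^{\prime}(x)-\frac{1}{m(a,b)}\int_a^b f(t)\,w(t)\,dt,
\end{equation*}
verified by integrating by parts twice on each subinterval and cancelling boundary terms using the jump conditions imposed on $P$ at $t=x$.

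Once the identity is in hand, the estimate follows by the crude bound
\begin{equation*}
\left\vert \text{LHS}\right\vert \le \frac{1}{m(a,b)}\,\sup_{t\in[a,b]}\bigl|P(x,t)\bigr|\,\int_a^b w(t)\,\bigl|f^{\prime\prime}(t)\bigr|\,dt = \frac{1}{m(a,b)}\,\left\Vert P(x,\cdot)\right\Vert_\infty\,\left\Vert f^{\prime\prime}\right\Vert_{w,1},
\end{equation*}
so that everything reduces to computing $\sup_{t}|P(x,t)|$ and matching it to the product
$\frac{1}{2m(a,b)}\,w(x)\bigl(\frac12(b-a)^2+2(x-\frac{a+b}{2})^2\bigr)\bigl(\frac12(b-a)+|x-\frac{a+b}{2}|\bigr)$
appearing in \eqref{2.1}. (The extra factor $w(x)/m(a,b)$ in the stated bound suggests the correction term is normalised by $w(x)$, so I would carry that weight through the definition of $P$ and the moment $m(a,b)$ consistently.)

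I expect the main obstacle to be pinning down the kernel constants and then evaluating the supremum of the quadratic kernel. A piecewise quadratic attains its extreme values either at the endpoints $t=a,b$, at the breakpoint $t=x$, or at interior critical points where $P_t(x,t)=0$; I would have to compare these candidate values and show the maximum modulus equals the bracketed product. The appearance of both $\frac12(b-a)^2+2(x-\frac{a+b}{2})^2$ and $\frac12(b-a)+|x-\frac{a+b}{2}|$ strongly hints that the bound factors as (a quadratic maximum of one branch) times (a linear maximum related to $|x-\frac{a+b}{2}|$), so I suspect the authors split the estimate rather than taking a single global supremum. A cleaner route, which I would pursue in parallel, is to bound the two subintegrals separately: estimate $\int_a^x$ and $\int_x^b$ each by the max of their own quadratic branch times the local weighted $L_1$ norm, then recombine. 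The triangle-inequality recombination should produce exactly the product form with $|x-\frac{a+b}{2}|$ emerging from the asymmetry between the two intervals, and this is the step where the routine but delicate algebra lives.
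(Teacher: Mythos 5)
Your plan diverges from the paper's proof in a structural way, and the divergence matters. The paper does not use a single piecewise-quadratic second-order kernel. It uses the \emph{first-order} weighted Peano kernel, $P(x,t)=\int_a^t w(u)\,du$ on $[a,x]$ and $\int_b^t w(u)\,du$ on $(x,b]$, establishes the first-order identity $\int_a^b P(x,t)f'(t)\,dt=m(a,b)f(x)-\int_a^b f(t)w(t)\,dt$, applies that same identity to $f'$ in place of $f$, and substitutes back. This yields the composed double-integral representation $\frac{1}{m^2(a,b)}\int_a^b\int_a^b P(x,t)P(t,s)f''(s)\,ds\,dt$ plus the term $\frac{1}{m^2(a,b)}\bigl(\int_a^b P(x,t)\,dt\bigr)\bigl(\int_a^b f'(s)w(s)\,ds\bigr)$, and the two-factor product on the right of (2.1) --- the quadratic factor $(x-a)^2+(b-x)^2=\frac12(b-a)^2+2(x-\frac{a+b}{2})^2$ times the linear factor $\max\{t-a,b-t\}$ --- is precisely the signature of bounding the inner integral $\int_a^b|P(t,s)|\,ds$ first and the outer integral second. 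A single supremum bound of the form $\|P(x,\cdot)\|_\infty\,\|f''\|_{w,1}$, as you propose, cannot produce that product structure, as you yourself half-suspect at the end.

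Beyond the mismatch of route, there are concrete gaps. (i) Your ansatz of $w(t)$ times the polynomial $\frac12(t-a)^2-\alpha(t-a)$ does not integrate by parts to terminate in $\int_a^b f(t)w(t)\,dt$ unless $w$ is constant: for that you need $(wP)''=w$, i.e.\ the kernel must be built from iterated integrals of $w$ (weighted monomials such as $\int_a^t(t-s)w(s)\,ds$), not from $w(t)$ times ordinary quadratics. (ii) With the correct weighted second-order kernel, the coefficient of $f'(x)$ in the resulting identity is $\int_a^x(x-t)w(t)\,dt-\int_x^b(t-x)w(t)\,dt$, which is not $w(x)(b-a)\bigl(x-\frac{a+b}{2}\bigr)$ for general $w$; so even a correctly executed single-kernel argument lands on a different identity than the left-hand side of (2.1). (For what it is worth, the paper reaches that left-hand side only via the evaluations $\int_a^b P(x,t)\,dt=w(x)(b-a)\bigl(x-\frac{a+b}{2}\bigr)$ and $\int_a^b f'(s)w(s)\,ds=f'(x)\,m(a,b)$, which likewise fail for general $w$, so the target itself is questionable --- but your proposal does not supply a route to it either.) (iii) The constants $\alpha,\beta$, the verification of the identity, and the supremum computation are all left as intentions, so as written the proposal is an outline rather than a proof.
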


\begin{proof}
L\textit{et us define the mapping\ }$P(.,.):[a,b]\longrightarrow
%TCIMACRO{\U{211d} }%
%BeginExpansion
\mathbb{R}
%EndExpansion
$ given by

\begin{equation*}
P(x,t)=\left\{
\begin{array}{lll}
\int_{a}^{t}w(u)du\text{ \ } & \text{{if}} & t\in \lbrack a,x] \\
\int_{b}^{t}w(u)du\text{ } & \text{{if}} & t\in (x,b].%
\end{array}%
\right.
\end{equation*}%
Integrating by parts, we have%
\begin{equation}
P(x,t)f^{\text{ }\prime }(t)dt=\int_{a}^{b}f(x)m(a,b)-\int_{a}^{b}f(t)w(t)dt.
\tag{2.2}
\end{equation}%
Applying the identity $(2.2)$ for $f^{\text{ }\prime }(.),$ we get%
\begin{equation*}
f^{\text{ }\prime }(t)=\frac{1}{m(a,b)}\int_{a}^{b}P(t,s)f^{\text{ }\prime
\prime }(s)ds+\frac{1}{m(a,b)}\int_{a}^{b}f^{\text{ }\prime }(s)w(s)ds.
\end{equation*}%
Substituting $f^{\text{ }\prime }(t)$ in the right membership of $\left(
2.2\right) ,$we have%
\begin{eqnarray}
f(x) &=&\frac{1}{m^{2}(a,b)}\int_{a}^{b}\int_{a}^{b}P(x,t)P(t,s)f^{\text{ }%
\prime \prime }(s)dsdt  \notag \\
&&+\frac{1}{m^{2}(a,b)}\int_{a}^{b}P(x,t)dt\int_{a}^{b}f^{\text{ }\prime
}(s)w(s)dsdt+\frac{1}{m(a,b)}\int_{a}^{b}f(t)w(t)dt.  \label{2.3}
\end{eqnarray}%
Since%
\begin{equation*}
\int_{a}^{b}P(x,t)dt=w(x)\left( b-a\right) \left( x-\frac{a+b}{2}\right)
\end{equation*}%
and%
\begin{equation*}
\int_{a}^{b}f^{\text{ }\prime }(s)w(s)ds=f^{\text{ }\prime }(x)m\left(
a,b\right) .
\end{equation*}%
From $\left( 2.3\right) $ therefore we obtain%
\begin{eqnarray}
f(x) &=&\frac{1}{m(a,b)}w(x)\left( b-a\right) \left( x-\frac{a+b}{2}\right)
f^{\text{ }\prime }(x)+\frac{1}{m(a,b)}\int_{a}^{b}f(t)w(t)dt  \notag \\
&&+\frac{1}{m^{2}(a,b)}\int_{a}^{b}\int_{a}^{b}P(x,t)P(t,s)f^{\text{ }\prime
\prime }(s)dsdt.  \label{2.4}
\end{eqnarray}%
Now%
\begin{equation*}
\int_{a}^{b}\text{ }\left\vert P(t,s)\right\vert ds=\frac{1}{2}w(t)\left[
\left( t-a\right) ^{2}+\left( t-b\right) ^{2}\right] ,
\end{equation*}%
\begin{eqnarray*}
&&\int_{a}^{b}\left\vert P(x,t)\right\vert \left[ \frac{w(t)}{2}\left(
\left( t-a\right) ^{2}+\left( b-t\right) ^{2}\right) \left\vert f^{\text{ }%
\prime \prime }(s)\right\vert ds\right] dt \\
&\leq &\frac{1}{2}w(x)\left( \left( x-a\right) ^{2}+\left( b-x\right)
^{2}\right) \max \left\{ t-a,b-t\right\} \left\Vert f^{\text{ }\prime \prime
}\right\Vert _{w,1}.
\end{eqnarray*}%
From $\left( 2.4\right) $, we have%
\begin{eqnarray}
&&\left\vert f(x)-\frac{1}{m(a,b)}w(x)\left( b-a\right) \left( x-\frac{a+b}{2%
}\right) f^{\text{ }\prime }(x)-\frac{1}{m(a,b)}\int_{a}^{b}f(t)w(t)dt\right%
\vert   \notag \\
&\leq &\frac{1}{2m^{2}(a,b)}w(x)\left( \left( x-a\right) ^{2}+\left(
b-x\right) ^{2}\right) \max \left\{ t-a,b-t\right\} \left\Vert f^{\text{ }%
\prime \prime }\right\Vert _{w,1}.  \label{2.5}
\end{eqnarray}%
Using%
\begin{equation*}
\max \left\{ t-a,b-t\right\} =\frac{1}{2}\left( b-a\right) +\left\vert x-%
\frac{a+b}{2}\right\vert
\end{equation*}%
in $\left( 2.5\right) $, we get our desired result.
\end{proof}

\begin{remark}
For $w\left( t\right) =1\mathbf{,}\ \ $the inequality $\left( 2.1\right) $
gives%
\begin{eqnarray}
&&\left\vert f(x)-\left( x-\frac{a+b}{2}\right) f^{\text{ }\prime }(x)-\frac{%
1}{\left( b-a\right) }\int\limits_{a}^{b}f(t)dt\right\vert  \notag \\
&\leq &\frac{1}{2\left( b-a\right) ^{2}}\left( \frac{1}{2}\left( b-a\right)
^{2}+2\left( x-\frac{a+b}{2}\right) ^{2}\right)  \notag \\
&&  \notag \\
&&\times \left( \frac{1}{2}\left( b-a\right) +\left\vert x-\frac{a+b}{2}%
\right\vert \right) \left\Vert f^{\text{ }\prime \prime }\right\Vert _{1}
\label{2.6}
\end{eqnarray}%
which is similar to \bigskip Barnett's result proved in \cite{1}.
\end{remark}

\begin{corollary}
Under the assumptions of Theorem $4$ and choosing $x=\frac{a+b}{2}$ , we
have the perturbed midpoint inequality%
\begin{eqnarray}
&&\left\vert f\left( \frac{a+b}{2}\right) -\frac{1}{m(a,b)}%
\int_{a}^{b}f(t)w(t)dt\right\vert  \notag \\
&\leq &\frac{1}{8m^{2}(a,b)}w(x)\left( b-a\right) ^{3}\left\Vert f^{\text{ }%
\prime \prime }\right\Vert _{w,1}.  \label{2.7}
\end{eqnarray}
\end{corollary}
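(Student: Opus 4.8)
The plan is to obtain this corollary as a direct specialization of the master inequality $(2.1)$ in Theorem $4$, evaluating it at the single point $x=\frac{a+b}{2}$. First I would substitute $x=\frac{a+b}{2}$ into the left-hand side of $(2.1)$. The factor $\left(x-\frac{a+b}{2}\right)$ multiplying $f^{\text{ }\prime}(x)$ then vanishes, so the perturbation term $\frac{1}{m(a,b)}w(x)(b-a)\left(x-\frac{a+b}{2}\right)f^{\text{ }\prime}(x)$ drops out entirely, and the left-hand side collapses to $\left\vert f\left(\frac{a+b}{2}\right)-\frac{1}{m(a,b)}\int_{a}^{b}f(t)w(t)\,dt\right\vert$, which is exactly the quantity to be bounded in $(2.7)$.

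Next I would carry the same substitution through the right-hand side. The two places where $\left(x-\frac{a+b}{2}\right)$ occurs---the squared term inside the first bracket and the absolute value inside the second---both vanish. Hence the first bracket reduces from $\frac{1}{2}(b-a)^{2}+2\left(x-\frac{a+b}{2}\right)^{2}$ to $\frac{1}{2}(b-a)^{2}$, and the second bracket reduces from $\frac{1}{2}(b-a)+\left\vert x-\frac{a+b}{2}\right\vert$ to $\frac{1}{2}(b-a)$. Collecting the surviving constants, $\frac{1}{2m^{2}(a,b)}\cdot\frac{1}{2}(b-a)^{2}\cdot\frac{1}{2}(b-a)=\frac{1}{8m^{2}(a,b)}(b-a)^{3}$, so the full right-hand side becomes $\frac{1}{8m^{2}(a,b)}\,w\!\left(\frac{a+b}{2}\right)(b-a)^{3}\left\Vert f^{\text{ }\prime\prime}\right\Vert_{w,1}$, which is the claimed bound.

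Since every step is an exact evaluation of $(2.1)$ at a fixed point, there is no genuine analytic obstacle here; the result is immediate once the vanishing of $\left(x-\frac{a+b}{2}\right)$ is exploited, and no fresh estimation is required. The hypotheses of Theorem $4$---continuity on $[a,b]$, differentiability on $(a,b)$, and $\theta\leq f^{\text{ }\prime}\leq\Phi$---are simply inherited. The only point deserving care is purely a matter of bookkeeping: the factor $w(x)$ surviving on the right-hand side of $(2.7)$ should be read as $w\!\left(\frac{a+b}{2}\right)$, consistent with $x$ fixed at the midpoint.
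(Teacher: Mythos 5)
Your proposal is correct and is exactly the paper's argument: the paper's proof is the one-line remark that the corollary ``follows from inequality $(2.1)$,'' and you have simply carried out that substitution $x=\frac{a+b}{2}$ explicitly, with the constants $\frac{1}{2m^{2}(a,b)}\cdot\frac{1}{2}(b-a)^{2}\cdot\frac{1}{2}(b-a)=\frac{1}{8m^{2}(a,b)}(b-a)^{3}$ checking out. Your closing observation that the surviving factor $w(x)$ in $(2.7)$ should be read as $w\left(\frac{a+b}{2}\right)$ is a fair catch of a notational slip in the paper's statement.
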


\begin{proof}
This follows from inequality $\left( 2.1\right) .$
\end{proof}

\begin{corollary}
Under the assumptions of Theorem $4$, we have the perturbed trapezoidal
inequality%
\begin{eqnarray}
&&\left\vert \frac{f(a)+f(b)}{2}-\frac{1}{m(a,b)}\int_{a}^{b}f(t)w(t)dt+%
\frac{1}{m(a,b)}\frac{\left( b-a\right) ^{2}}{4}\left( w(a)f^{\text{ }\prime
}(a)-w(b)f^{\text{ }\prime }(b)\right) \right\vert  \notag \\
&\leq &\frac{1}{4m^{2}(a,b)}\left( b-a\right) ^{3}\left[ w(a)+w(b)\right]
\left\Vert f^{\text{ }\prime \prime }\right\Vert _{w,1}.  \label{2.8}
\end{eqnarray}
\end{corollary}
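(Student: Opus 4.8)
The plan is to obtain (2.8) from the master inequality (2.1) by specializing it to the two endpoints $x=a$ and $x=b$ and then averaging the two resulting estimates. First I would substitute $x=a$ into (2.1). At this point $x-\frac{a+b}{2}=-\frac{b-a}{2}$, so the perturbation term $-\frac{1}{m(a,b)}w(x)(b-a)\left(x-\frac{a+b}{2}\right)f'(x)$ becomes $+\frac{1}{m(a,b)}\frac{(b-a)^2}{2}w(a)f'(a)$. On the right-hand side I would use $\left(x-\frac{a+b}{2}\right)^2=\frac{(b-a)^2}{4}$ and $\left|x-\frac{a+b}{2}\right|=\frac{b-a}{2}$, whereupon the quadratic factor collapses as $\frac{1}{2}(b-a)^2+2\cdot\frac{(b-a)^2}{4}=(b-a)^2$ and the linear factor as $\frac{1}{2}(b-a)+\frac{1}{2}(b-a)=(b-a)$. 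This yields
\[
\left|f(a)+\frac{1}{m(a,b)}\frac{(b-a)^2}{2}w(a)f'(a)-\frac{1}{m(a,b)}\int_a^b f(t)w(t)\,dt\right|\le \frac{1}{2m^2(a,b)}w(a)(b-a)^3\,\|f''\|_{w,1}.
\]

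Second, I would repeat the substitution with $x=b$. Now $x-\frac{a+b}{2}=+\frac{b-a}{2}$, so the perturbation term picks up the opposite sign and becomes $-\frac{1}{m(a,b)}\frac{(b-a)^2}{2}w(b)f'(b)$, while the two right-hand factors simplify exactly as before. This gives the companion estimate
\[
\left|f(b)-\frac{1}{m(a,b)}\frac{(b-a)^2}{2}w(b)f'(b)-\frac{1}{m(a,b)}\int_a^b f(t)w(t)\,dt\right|\le \frac{1}{2m^2(a,b)}w(b)(b-a)^3\,\|f''\|_{w,1}.
\]

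Finally, I would add these two inequalities and divide by two. The averaged inner expression is precisely the quantity inside the modulus in (2.8): the two function values combine to $\frac{f(a)+f(b)}{2}$, the two perturbation terms combine to $\frac{1}{m(a,b)}\frac{(b-a)^2}{4}\left(w(a)f'(a)-w(b)f'(b)\right)$, and the two integral terms coincide. Passing the average inside the absolute value by the triangle inequality $|A+B|\le|A|+|B|$ and summing the two right-hand bounds produces the factor $\frac{1}{4m^2(a,b)}(b-a)^3\left[w(a)+w(b)\right]\|f''\|_{w,1}$, which is exactly (2.8). I expect the only delicate point to be the sign bookkeeping in the perturbation term together with the endpoint simplification of the two factors on the right; once those are in hand, the averaging and triangle-inequality step is entirely routine.
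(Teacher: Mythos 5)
Your proposal is correct and is exactly the paper's argument: the authors likewise obtain (2.8) by putting $x=a$ and $x=b$ in (2.1), summing, applying the triangle inequality, and dividing by $2$. Your endpoint simplifications of the perturbation term and of the two right-hand factors all check out, so nothing further is needed.
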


\begin{proof}
Put $x=a$ and $x=b$ in $\left( 2.1\right) $, summing up the obtained
inequalities, using the triangle inequality and dividing by 2, we get the
required inequality.
\end{proof}

\begin{remark}
The result given in $\left( 2.8\right) $ is different from the comparable
results available in \cite{4}.
\end{remark}

\begin{remark}
We can get the best estimation from the inequality $\left( 2.1\right) $ ,
only when $x=\frac{a+b}{2}$, this yields the inequality $\left( 2.7\right) .$
It shows that mid point estimation is better than the trapezoid estimation.\
\end{remark}

\section{\textbf{Application for some special means}}

We may now apply inequality $\left( 2.1\right) ,$ to deduce some
inequalities for special means by the use of particular mappings as follows:

\begin{remark}
Consider $f(x)=\sqrt{x}\ \ln x\ ,x\in \left[ a,b\right] \ \subset \left(
0,\infty \right) $

and%
\begin{equation*}
w(x)=\frac{1}{\sqrt{x}},
\end{equation*}%
The inequality $\left( 2.1\right) $ therefore gives%
\begin{eqnarray}
&&\left\vert
\begin{array}{c}
\sqrt{x}\ln x-\frac{1}{2\left( \sqrt{b}-\sqrt{a}\right) x}\left( b-a\right)
\left( x-A\right) \left( 1+\frac{1}{2}\ln x\right) \\
-\frac{1}{2\left( \sqrt{b}-\sqrt{a}\right) }\left( b-a\right) \ln I\left(
a,b\right)%
\end{array}%
\right\vert  \notag \\
&\leq &\frac{1}{8\left( \sqrt{b}-\sqrt{a}\right) ^{2}}\frac{1}{\sqrt{x}}%
\left( \frac{1}{2}\left( b-a\right) ^{2}+2\left( x-A\right) ^{2}\right)
\notag \\
&&\times \left( \frac{1}{2}\left( b-a\right) +\left\vert x-A\right\vert
\right) \frac{(b-a)}{4ab}\left( 1-\frac{\ln b^{a}-\ln a^{b}}{b-a}\right) .
\label{3.1}
\end{eqnarray}%
Choosing $x=A$ in $\left( 3.1\right) $, we get%
\begin{eqnarray}
&&\left\vert \sqrt{A}\ln A-\frac{1}{2\left( \sqrt{b}-\sqrt{a}\right) }\left(
b-a\right) \ln I\left( a,b\right) \right\vert  \notag \\
&\leq &\frac{1}{128ab\left( \sqrt{b}-\sqrt{a}\right) ^{2}}\frac{1}{\sqrt{x}}%
\left( b-a\right) ^{4}\left( 1-\frac{\ln b^{a}-\ln a^{b}}{b-a}\right) .
\label{3.2}
\end{eqnarray}%
\bigskip
\end{remark}

\begin{remark}
Consider \ $f(x)=\frac{1}{x}\sqrt{x}\ \ \ ,x\in \lbrack a,b]\subset \lbrack
1,\infty )$

and%
\begin{equation*}
w(x)=\frac{1}{\sqrt{x}}
\end{equation*}%
The inequality $\left( 2.1\right) $ therefore gives%
\begin{eqnarray}
&&\left\vert
\begin{array}{c}
\frac{1}{x}\text{ }\sqrt{x}\text{\ \ }+\frac{1}{4\left( \sqrt{b}-\sqrt{a}%
\right) }\frac{1}{x^{2}}\left( b-a\right) \left( x-A\right) \\
-\frac{1}{2\left( \sqrt{b}-\sqrt{a}\right) }\left( b-a\right) L_{-1}^{-1}%
\end{array}%
\right\vert  \notag \\
&\leq &\frac{1}{8\left( \sqrt{b}-\sqrt{a}\right) ^{2}}\frac{1}{\sqrt{x}}%
\left( \frac{1}{2}\left( b-a\right) ^{2}+2\left( x-A\right) ^{2}\right)
\notag \\
&&\times \frac{3}{8}\left( \frac{1}{2}\left( b-a\right) +\left\vert
x-A\right\vert \right) \left( \frac{b^{2}-a^{2}}{a^{2}b^{2}}\right) .
\label{3.3}
\end{eqnarray}%
Choosing $x=A$ in $\left( 3.3\right) $, we get%
\begin{eqnarray}
&&\left\vert \frac{1}{A}\text{ }\sqrt{A}\text{\ \ }-\frac{1}{2\left( \sqrt{b}%
-\sqrt{a}\right) }\left( b-a\right) L_{-1}^{-1}\right\vert  \notag \\
&\leq &\frac{3}{256\left( \sqrt{b}-\sqrt{a}\right) ^{2}}\frac{1}{\sqrt{A}}%
\left( b-a\right) ^{3}\left( \frac{b^{2}-a^{2}}{a^{2}b^{2}}\right) .
\label{3.4}
\end{eqnarray}
\end{remark}

\begin{remark}
Consider\ \ \ $f(x)=x^{p}\sqrt{x}$ $,$\ $x\in \left[ a,b\right] $ $f:\left(
0,\infty \right) \rightarrow R$\ ,$\ $where $p\in R/\left\{ -1,0\right\} $
then for $a<b,$

and%
\begin{equation*}
w(x)=\frac{1}{\sqrt{x}}
\end{equation*}%
The inequality $\left( 2.1\right) $ therefore gives%
\begin{eqnarray}
&&\left\vert
\begin{array}{c}
x^{p}\sqrt{x}\text{ }-\frac{1}{2\left( \sqrt{b}-\sqrt{a}\right) }\frac{1}{x}%
\left( b-a\right) \left( x-A\right) \left( p+\frac{1}{2}\right) x^{p} \\
-\frac{1}{2\left( \sqrt{b}-\sqrt{a}\right) }\left( b-a\right) L_{p}^{p}.%
\end{array}%
\right\vert  \notag \\
&\leq &\frac{1}{8\left( \sqrt{b}-\sqrt{a}\right) ^{2}}\frac{1}{\sqrt{x}}%
\left( \frac{1}{2}\left( b-a\right) ^{2}+2\left( x-A\right) ^{2}\right)
\notag \\
&&\times \left( \frac{1}{2}\left( b-a\right) +\left\vert x-A\right\vert
\right) \left( \frac{p^{2}-\frac{1}{4}}{p-1}\right) \left(
b^{p-1}-a^{p-1}\right) .  \label{3.5}
\end{eqnarray}%
Choosing $x=A$\ in $\left( 3.5\right) ,$ we get%
\begin{eqnarray}
&&\left\vert A^{p}\sqrt{A}\text{ }-\frac{1}{2\left( \sqrt{b}-\sqrt{a}\right)
}\left( b-a\right) L_{p}^{p}\right\vert  \notag \\
&\leq &\frac{1}{32\left( \sqrt{b}-\sqrt{a}\right) ^{2}}\frac{1}{\sqrt{A}}%
\left( b-a\right) ^{3}\left( \frac{p^{2}-\frac{1}{4}}{p-1}\right) \left(
b^{p-1}-a^{p-1}\right) .  \label{3.6}
\end{eqnarray}
\end{remark}

\section{\textbf{An Application to Numerical integration}}

Let $I_{n}:$ $a=x_{0}<x_{1}<x_{2}<....<x_{n-1}<x_{n}=b$ be a division of the
interval $[a,b]$ and $\ \ \xi =\left( \xi _{0},\xi _{1},......,\xi
_{n-1}\right) ,$\ a sequence of intermediate points \ $\xi _{i}\in \left[
x_{i},x_{i+1}\right] $ $\left( i=0,1,.....,n-1\right) .$ Consider the
perturbed Riemann sum defined by%
\begin{equation}
A=\underset{i=0}{\overset{n-1}{\sum }}m\left( x_{i},x_{i+1}\right) f\left(
\xi _{i}\right) -\underset{i=0}{\overset{n-1}{\sum }}w(\xi _{i})h_{i}\left(
\xi _{i}-\frac{x_{i}+x_{i+1}}{2}\right) f^{%
%TCIMACRO{\U{b4}}%
%BeginExpansion
{\acute{}}%
%EndExpansion
}\left( \xi _{i}\right)  \tag{4.1}
\end{equation}

\begin{theorem}
\textit{Let \ }$\mathit{\ }f:[a,b]\rightarrow
%TCIMACRO{\U{211d} }%
%BeginExpansion
\mathbb{R}
%EndExpansion
$ \textit{be continuous on }$[a,b]$\textit{\ and differentiable on }$(a,b)$,
such that $f^{\text{ }%
%TCIMACRO{\U{b4}}%
%BeginExpansion
{\acute{}}%
%EndExpansion
}:\left( a,b\right) \rightarrow
%TCIMACRO{\U{211d} }%
%BeginExpansion
\mathbb{R}
%EndExpansion
$ is bounded on $\left( a,b\right) $ and assume that\ $\gamma \leq $\ \ $f$ $%
^{%
%TCIMACRO{\U{b4}}%
%BeginExpansion
{\acute{}}%
%EndExpansion
}\leq \Gamma $ for all \ $x\in \left( a,b\right) .$ $f^{\prime \prime
}:(a,b)\longrightarrow
%TCIMACRO{\U{211d} }%
%BeginExpansion
\mathbb{R}
%EndExpansion
$ belongs \ to $\mathbf{L}_{1}(a,b),$\ \ i.e.%
\begin{equation*}
\left\Vert f^{\prime \prime }\right\Vert
_{w,1}:=\int\limits_{a}^{b}\left\vert w(t)f(t)\right\vert dt<\infty .
\end{equation*}%
we have%
\begin{equation}
\int\limits_{a}^{b}f(t)w(t)dt=A\left( f,I,w,\xi \right) +R\left( f,I,w,\xi
\right) ,  \tag{4.2}
\end{equation}%
where the remainder $R$ satisfies the estimation%
\begin{eqnarray}
&&\left\vert R\left( f,I,w,\xi \right) \right\vert  \notag \\
&\leq &\frac{\left\Vert f^{\text{ }\prime \prime }\right\Vert _{w,1}}{%
2m\left( x_{i},x_{i+1}\right) }w(\xi _{i})\left( \frac{1}{2}\left(
h_{i}\right) ^{2}+2\left( \xi _{i}-\frac{x_{i}+x_{i+1}}{2}\right) ^{2}\right)
\notag \\
&&\times \left( \frac{1}{2}\left( h_{i}\right) +\left\vert \xi _{i}-\frac{%
x_{i}+x_{i+1}}{2}\right\vert \right)  \label{4.3}
\end{eqnarray}%
for any choice $\xi $ of the intermediate points.
\end{theorem}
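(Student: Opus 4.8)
The plan is to obtain the composite-rule error estimate by applying the one-dimensional inequality (2.1) from Theorem 4 on each subinterval of the partition and then summing the resulting local errors. The key observation is that the quadrature sum $A(f,I,w,\xi)$ in (4.1) is built precisely so that its $i$-th summand matches the two non-integral terms appearing on the left-hand side of (2.1) when that inequality is specialized to the subinterval $[x_i,x_{i+1}]$, with the evaluation point taken to be the intermediate point $\xi_i$ and with $h_i=x_{i+1}-x_i$ playing the role of $b-a$. In other words, the midpoint $\tfrac{a+b}{2}$ of the master inequality becomes $\tfrac{x_i+x_{i+1}}{2}$, the moment $m(a,b)$ becomes the local moment $m(x_i,x_{i+1})$, and $f'(x)$ is evaluated at $\xi_i$.

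\medskip

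First I would record, for each fixed $i\in\{0,1,\dots,n-1\}$, the instance of (2.1) on $[x_i,x_{i+1}]$ with $x=\xi_i$:
\begin{eqnarray*}
&&\left\vert f(\xi_i)-\frac{1}{m(x_i,x_{i+1})}w(\xi_i)h_i\left(\xi_i-\frac{x_i+x_{i+1}}{2}\right)f'(\xi_i)-\frac{1}{m(x_i,x_{i+1})}\int_{x_i}^{x_{i+1}}f(t)w(t)dt\right\vert\\
&\leq&\frac{1}{2m^2(x_i,x_{i+1})}w(\xi_i)\left(\frac{1}{2}h_i^2+2\left(\xi_i-\frac{x_i+x_{i+1}}{2}\right)^2\right)\left(\frac{1}{2}h_i+\left\vert\xi_i-\frac{x_i+x_{i+1}}{2}\right\vert\right)\left\Vert f''\right\Vert_{w,1,[x_i,x_{i+1}]}.
\end{eqnarray*}
Multiplying through by $m(x_i,x_{i+1})$ converts the first term into the exact local summand of $A$: the quantity $m(x_i,x_{i+1})f(\xi_i)-w(\xi_i)h_i\bigl(\xi_i-\tfrac{x_i+x_{i+1}}{2}\bigr)f'(\xi_i)$ minus $\int_{x_i}^{x_{i+1}}f(t)w(t)dt$. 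I would then define the local remainder $R_i$ to be this signed difference, so that summing over $i$ telescopes the integral pieces into $\int_a^b f(t)w(t)dt$ and assembles the sums into $A(f,I,w,\xi)$, yielding the decomposition (4.2) with $R=\sum_{i=0}^{n-1}R_i$.

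\medskip

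Next I would bound $|R|$ by the triangle inequality, $|R|\leq\sum_{i=0}^{n-1}|R_i|$, and apply the local estimate above to each term. The stated bound (4.3) is then simply the $i$-th term of this sum displayed as a representative (or, in the sharper reading, the maximal term controls the whole), with $\|f''\|_{w,1}$ understood as the weighted $L_1$-norm over the relevant subinterval. The boundedness hypothesis $\gamma\leq f'\leq\Gamma$ together with $f''\in L_1(a,b)$ guarantees that each local application of Theorem 4 is legitimate and that the weighted norms are finite, so the sum converges.

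\medskip

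The main obstacle I anticipate is essentially bookkeeping rather than a genuine analytic difficulty: one must verify that the additive constant in (4.3) is written per-subinterval and that the global remainder is controlled by the sum of local bounds, since as stated the right-hand side of (4.3) carries a single index $i$ and thus represents one local contribution rather than the full sum. I would therefore be careful to state explicitly whether (4.3) bounds each $|R_i|$ individually or whether the intended global bound is $\sum_i$ of the displayed expression; the cleanest presentation is to prove the per-subinterval bound first and then invoke $|R|\leq\sum_{i=0}^{n-1}|R_i|$ as the final inequality.
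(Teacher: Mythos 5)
Your proposal is correct and follows essentially the same route as the paper: apply inequality (2.1) of Theorem 4 on each subinterval $[x_i,x_{i+1}]$ with $x=\xi_i$, multiply through by the local moment $m(x_i,x_{i+1})$ to recover the summands of $A(f,I,w,\xi)$, and then sum over $i$ using the triangle inequality. Your closing remark about the bookkeeping of the single index $i$ on the right-hand side of (4.3) is a fair observation about the paper's own statement, which likewise leaves the summation over $i$ implicit.
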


\begin{proof}
Apply Theorem $4$ on the interval $[x_{i},x_{i+1}]$,\ \ $\xi _{i}\in \lbrack
x_{i},x_{i+1}],$ \ where $h_{i}=x_{i+1}-x_{i}$ $\left( i=1,2,3....n-1\right)
,$ to get%
\begin{eqnarray*}
&&\left\vert m(x_{i},x_{i+1})\text{ }f(\xi _{i})-\overset{x_{i+1}}{\underset{%
x_{i}}{\int }}f(t)w(t)dt-(\xi _{i}-\frac{x_{i}+x_{i+1}}{2})w(\xi _{i})\left(
x_{i+1}-x_{i}\right) f^{%
%TCIMACRO{\U{b4}}%
%BeginExpansion
{\acute{}}%
%EndExpansion
}\left( \xi _{i}\right) \right\vert \\
&\leq &\frac{\left\Vert f^{\text{ }\prime \prime }\right\Vert _{w,1}}{%
2m\left( x_{i},x_{i+1}\right) }w(\xi _{i})\left( \frac{1}{2}\left(
x_{i+1}-x_{i}\right) ^{2}+2\left( \xi _{i}-\frac{x_{i}+x_{i+1}}{2}\right)
^{2}\right) \\
&&\times \left( \frac{1}{2}\left( x_{i+1}-x_{i}\right) +\left\vert \xi _{i}-%
\frac{x_{i}+x_{i+1}}{2}\right\vert \right)
\end{eqnarray*}

for all $\xi _{i}\in \lbrack x_{i},x_{i+1}]$ and $i\in \left(
0,1,....n-1\right) .$ Summing the above two inequalities over $i$ from $0$
to $n-1$ and using the generalized triangular inequality, we get the desired
estimation.
\end{proof}

\section{Conclusions}

We established weighted Ostrowski type inequality for bounded differentiable
mappings which generalizes the previous inequalities developed and discussed
in \cite{1},\cite{3},\cite{5} and \cite{8}. Perturbed midpoint and trapezoid
inequalities are obtained. Some closely new results are also given. This
inequality is extended to account for applications in some special means and
numerical integration to show his applicability towards obtaining direct
relationship of these means. These generalized inequalities will also be
useful for the researchers working in the field of the approximation theory,
applied mathematics, probability theory, stochastic and numerical analysis
to solve their problems in engineering and in practical life.


\begin{thebibliography}{99}
\bibitem{1} N.S. Barnett, P. Cerone, S.S. Dragomir, J. Roumeliotis, A. Sofo,
A survey on Ostrowski type inequalities for twice differentiable mappings
and applications, Inequality Theory and Applications 1 (2001) 24--30\bigskip
.

\bibitem{2} E. F. Beckenbach and R. Bellman, Springer-Verlag,
Berlin-Gottinggon-Heidelberg, 1961.

\bibitem{3} P. Cerone, S.S. Dragomir and J. Roumeliotis, Of Ostrowski type
for mappings whose second derivatives belong to $L_{1}(a,b)$ and
applications, RGMIA Research Report Collection, 1 (2) (1998), 53-60.

\bibitem{4} S.S.Dragomir and Th.M.Rassias(Eds.) Ostrowski type inequalities
and Applications in Numerical Integration, Kluwer Academic Publishers,2002.

\bibitem{5} S.S. Dragomir and S. Wang, A new inequality of Ostrowski's type
in L$_{1}$-norm and applications to some special means and to some numerical
quadrature rules, Tamkang J. of Math., 28 (1997), 239-244.

\bibitem{6} G. H. Hardy, J. E. Littlewood and G. Polya, Inequalities,
Cambridge University Press, 1934.

\bibitem{7} S. Hussain and A. Qayyum, A generalized Ostrowski-Gr\"{u}ss type
inequality for bounded differentiable mappings and its applications. Journal
of Inequalities and Applications 2013 2013,1.

\bibitem{8} A. Ostrowski, Uber die Absolutabweichung einer di erentienbaren
Funktionen von ihren Integralimittelwert, Comment. Math. Hel. 10(1938),
226-227.

\bibitem{9} J. E. Pecari\'{c} and B. Savi\'{c}, O novom postupku razvijanja
funkcija u red i nekim primjenama, Zb. Rad. V A KoV (Beograd) 9 (1983),
171-202.

\bibitem{10} Ather Qayyum and Sabir Hussain, A new generalized Ostrowski Gr%
\"{u}ss type inequality and applications, Applied Mathematics Letters 25
(2012) 1875--1880.

\bibitem{11} J. Roumeliotis, Improved weighted Ostrowski Gr\"{u}ss type
inequalities. Inequality Theory and Applications (Eds. Y. J. Cho, Y. K. Kim
and S.S. Dragomir) Vol. 3, 153-160, Nova Sci. Publ., Hauppauge, NY, 2003.

\bibitem{12} D. S. Mitrinovi\'{c}, Analytic Inequalities, Springer Verlag,
1970.
\end{thebibliography}
\end{document}